\newtheorem{thm}[subsection]{Theorem}
\newtheorem{pro}[subsection]{Proposition}
\newtheorem{cor}[subsection]{Corollary}
\newtheorem{rk}[subsection]{Remark}
\newtheorem{defn}[subsection]{Definition}
\numberwithin{equation}{section} \setcounter{tocdepth}{1}
\newcommand{\bea}{\begin{eqnarray}}
\newcommand{\eea}{\end{eqnarray}}
\newcommand{\R}{\mathbb{R}}
\def \> {\Rightarrow}
\def \0 {\emptyset}
\begin{document}
\title[$(G,\mu)-$Quadratic stochastic operators]{$(G,\mu)-$Quadratic stochastic operators}

\author{J. Blath, U.U. Jamilov, M. Scheutzow}

\address{M.\ Scheutzow and J.\ Blath\\ Institut f\"ur Mathematik, MA 7-5, Fakult\"at II,
        Technische Universit\"at Berlin, Stra\ss e des 17.~Juni 136, 10623 Berlin, FRG;}
\email {ms@mail.math.tu-berlin.de\   \ blath@mail.math.tu-berlin.de}
  \address{U.\ U.\ Jamilov\\ Institute of mathematics at the National University of Uzbekistan,
29, Do'rmon Yo'li str., 100125, Tashkent, Uzbekistan.}
\email {jamilovu@yandex.ru}

\date{\today}

  \maketitle

\begin{abstract}
     We consider a new subclass of quadratic stochastic (evolutionary) operators on the simplex indexed by a finite Abelian  group $G$ 
with heredity law $\mu$. With the help of the notion of $s(\mu)$-invariant subgroups, where $s(\mu)$ denotes the support of $\mu$ in $G$, 
we prove that almost all (w.r.t.\ Lebesgue measure) trajectories of such operators converge to a unique fixed point which is the center 
of the simplex. We also identify and describe the periodic trajectories of the operator and  give conditions for regularity and
periodicity.
\end{abstract}
\vskip 0.5 truecm
\maketitle

{\bf Mathematics Subject Classification(2010):} Primary 37N25,
Secondary 92D25.

\vskip 0.5 truecm

{\bf Key words.} Quadratic stochastic operator, Volterra and non-Volterra operators,
evolutionary operator. %, Finite group.

\section{Introduction}

The notion of quadratic stochastic operators (QSOs) was introduced by S.N.~Bernstein in \cite{Br}, and since then the theory of quadratic 
stochastic operators has been developed for more than 85 years (see e.g.\ \cite{RN1} --\cite{GZ}, \cite{HS},\cite{K1},\cite{K2},\cite{RJ1},\cite{RJ2} for some classic as well as recent results).  While QSOs were originally introduced as ``evolutionary operators'' describing the dynamics of gene frequencies for given laws of heredity in mathematical population genetics (see \cite{Lyu} for a comprehensive account), they are also interesting from a purely mathematical point of view. Their full classification remains a challenging open problem. 

%In recent years, there has been a considerable growth of interest in this theory due to
%its numerous applications to problems of mathematics, biology, and physics.

A quadratic stochastic {\color{black}(evolutionary)} operator {\color{black}arises in population genetics} as follows. Consider a {(large)} population { with $m \in \mathbb N$ different genetic types. Let $[m]:=\{1, 2, \dots, m\}$ and} $x^0 = (x^0_1,...,x^0_m)$ be the {relative frequencies of the genetic types within the whole population in the present generation, which is a probability distribution and hence an element of the simplex indexed by $[m]$ which we denote by $S^{m-1}$. To determine  the (expected) gene frequencies in the next generation, let} $p_{ij,k}$ {be} the probability
that {two} individuals {of type $i$ resp.\ $j$} interbreed to
produce {an offspring with genetic type} $k$. Then, the probability distribution {$x' = (x'_1,...,x'_m) \in S^{m-1}$ describing the (expected) gene frequencies} in the
{next} generation {is given by} 
\begin{equation}
\label{ng}
x'_k =\sum\limits_{i,j=1}^m p_{ij,k}x^0_ix^0_j, \  \  k = 1,...,m.
\end{equation}

The association $x^0 {\mapsto} x'$ defines a map
{$V: S^{m-1} \to S^{m-1}$} called  evolution{ary} operator. The population evolves by
starting from an arbitrary {frequency distribution} $x^0$, then passing to the state
$x' = V (x^0)$ (the next  {``}generation"), then to the state
$x'' = V (V (x^0))$, and so on.
Thus {the evolution of gene frequencies} of the population {can be considered as a} dynamical system
$$
x^0, \  \ x' = V (x^0), \  \ x'' = V^2(x^0), \  \ x'''= V^3(x^0), \  \ . . .
$$
Note that {$V$} (defined by (\ref{ng}){)} is a non-linear (quadratic) operator, and i{ts dimension increases with $m$}. Higher dimensional dynamical systems are important but there are
relatively few dynamical phenomena that are currently understood
(\!\!\cite{D}, \cite{E}, \cite{R}).\\

{One of the main objects of study for QSOs is the asymptotic behavior of their
trajectories depending on the initial value}. 
This { has been determined so far only for certain special subclasses of QSOs.}
{ Indeed, a natural choice for the $p_{ij, k}$, also called ``coefficients of heredity'', with an obvious biological interpretation, is given by} 
\begin{equation}
\label{koefvolt}
p_{ij,k}=0, \ \ \mbox {if} \ \ k\notin \{i,j\}, \ \ i,j,k=1,...,m.
\end{equation}
{ In this case, we speak of a {\em Volterra QSO}, and the corresponding asymptotic behaviour of their trajectories has been analysed in \cite{RN1}, \cite{RN2} and \cite{RNEs}}
using the theory of Lyapunov functions
and tournaments. In \cite{MAT}, {infinite dimensional Volterra operators and their dynamics have been studied.}

{However, in the non-Volterra case (i.e., where condition (\ref{koefvolt}) is violated), many questions remain open and there seems to be no general theory 
available.} %There are a few articles devoted to such operators. 
See \cite{GMR} for a recent review of QSOs.\\ % of results concerning to the theory of QSOs is given.\\

{
In the present article, we investigate a certain class of non-Volterra QSOs which exhibit an additional group structure in the definition of the $p_{ij,k}$ that allows us to obtain rather complete asymptotic results. More precisely, instead of considering the simplex $S^{m-1}$ over $[m]$, we regard a finite Abelian group $G=(G, +)$, say of order $m$, and the corresponding simplex $S^G$ indexed by $G$ (which can also be regarded as the space of all probability measures on $G$). Then, for any measure $\mu \in S^G$ we define
the ``coefficients of heredity'' by}
\begin{equation*}
p_{ij,k}{:}=\mu_{k-i-j},\quad  \forall i,j,k \in G.
\end{equation*}
{If $s(\mu)$ denotes the support of $\mu$ in $G$, we introduce the notion of an ``$s(\mu)-$invariant subgroup'', with the help of which} we prove that the trajectory of such operator{s} always converges either to {a} periodic trajectory or to {a fixed point. In particular, they are all ergodic}. We also show that the speed of convergence {to the limit resp.\ to the periodic orbit is rather} fast ({in fact, double-exponential).
Finally, we give criteria for regularity and periodicity.}
\\

{Note that i}n \cite{GWZ}{, the authors also} consider {a} class of quadratic stochastic operators corresponding to {a finite} Abelian group, {however with a different choice of the $p_{ij,k}$.} {We will discuss their model and result below.}\\

The paper is organized as follows. The next chapter provides some preliminaries and {previously known} results from the theory
of QSO{s}. In {C}hapter 3 we introduce {our} new class {of} nonlinear operators and {state and prove our results.}
%study {their} dynamics.

\section{Preliminaries {and known results}}
A quadratic stochastic operator (QSO) %({cf.\ }\cite{K1},\cite{K2},\cite{Lyu})
  is a mapping {$V$} of the simplex
\begin{equation}\label{simp}
S^{m-1}={\Big\{}x=(x_1,...,x_m)\in {\mathbb{R}^m}: x_i\geq 0, \, \sum^m_{i=1}x_i=1 {\Big\}}
\end{equation}
into itself, of the form {$V(x)=x' \in S^{m-1}$ where}
\begin{equation}
\label{kso}
x_k'=\sum^m_{i,j=1}p_{ij,k}x_ix_j, \ \ (k=1,...,m),
\end{equation}
{and the} $p_{ij,k}$  {satisfy}
\begin{equation}\label{koefkso}
p_{ij,k}=p_{ji,k}\geq 0, \quad  \ \ \sum^m_{k=1}p_{ij,k}=1, \ \ (i,j,k=1,...,m).
\end{equation}
The trajectory (orbit) $\{x^{(n)}\}$ for an initial {value} $x^{(0)}\in S^{m-1}$  is defined by 
$$
x^{(n+1)}=V(x^{(n)})={V^{n+1}(x^{(0)})}, \quad n=0,1,2,\dots
$$ 
%Denote by $\omega (x^{0})$ the set of limit points of the trajectory $x^{(n)}$.

{We now} recall some definitions and results from the theory {of} QSO{s}.  

\begin{defn}
\label{fp}
A  point $x\in S^{m-1}$ is called a fixed point of a QSO $V$ if $V(x)=x.$
\end{defn}

\begin{defn}
\label{reg}
A {QSO} $V$ is called regular if for any initial point $x \in S^{m-1}$  the limit
$$\lim_{n\rightarrow \infty}V{^n(x)}$$
exists.
{We call the  $V$ almost regular, if the above condition holds for almost all (w.r.t.\ Lebesgue-measure) initial points $x$.}
\end{defn}

Note that {our QSOs are continuous operators and that the simplex over a finite set is compact and convex, so that by the Brouwer Fixed-Point Theorem there is always at least one fixed point. Further, by continuity, any limit point of a QSO is also a fixed point.}
Limit behavior of trajectories and fixed points of QSO{s} play {an} important role in many applied problems{, see e.g.} \cite{RN1},\cite{RN2},\cite{J},\cite{K1}, \cite{Lyu}.
The {intuitive meaning} of the regularity of {a} QSO  {in terms of mathematical genetics} is {obvious: In the} long run the distribution of {gene frequencies tends to an equilibrium, no matter what the initial condition was. Further, if a given 
limit point is strictly inside the simplex, then this means that there is long-term coexistence of genetic types under the respective initial distribution.

Of course, it is not necessarily clear  whether a given set of ``coefficients of heredity'' has a direct biological interpretation at all, but this is not the main point of this paper. Instead, as mentioned above, we focus on a certain part of the classification problem for QSOs that is inspired a priori purely by mathematical curiosity.
}

\begin{defn}
\label{def:erg}
$V$ is said to be ergodic if the limit
\begin{equation}
\label{erg}
\lim_{n\rightarrow \infty } \frac{1}{n} \sum_{k=0}^{n-1} V^k(x)
\end{equation}
exists for any $x\in S^{m-1}.$
\end{defn}

Evidently, any regular QSO and -- more generally -- any QSO for which every trajectory converges to a (not necessarily strict) periodic orbit 
is ergodic, but the converse is not necessarily true.\\

On the basis of numerical calculations Ulam conjectured \cite{U} that
any QSO is ergodic.
  In 1977, Zakharevich \cite{Z} proved that this conjecture is false in general.
  Later in \cite{GZ} necessary and sufficient
condition{s for ergodicity of a QSO defined on $S^2$ were established}.

Now we {introduce} some notation and results from  \cite{GWZ}.
{Recall} that the simplex $S^{m-1}$ is the set of all probability measures on ${[m]}=\{1,...,m\}$.
We {now} consider instead of ${[m]}$ { a finite Abelian group $G$} of order $m$ {and the corresponding simplex $S^G$ over $G$}.
Let $U \subset G$ be a subgroup of $G$ and $\{g+U: g\in G\}$ be the cosets of $U$ in $G$.
Suppose $\lambda\in {S^G}$ is a fixed positive measure, that is $\lambda_i=\lambda(i)>0$ for any $i\in G$.
Then we define the coefficients of heredity {as in \cite{GWZ} by}
\begin{equation} 
\label{koefGZ}
p_{ij,k}=\left\{\begin{array}{lll}
\frac{\lambda_k}{\lambda(i+j+U)}, \ \ \mbox {if} \ \ k\in \{i+j+U\};\\[3mm]
0, \ \ \mbox {otherwise}, \ \
i,j,k \in G.
\end{array}\right.
\end{equation}

{Note that if} $U=\{0\}$, { where $0$ is the} identity element of {the} group $G$,  then {the} corresponding QSO has the form
\begin{equation}
\label{ksoGZ}
x'_k=\sum\limits_{i,j\in G:\atop i+j=k} x_ix_j
\end{equation}
{For convenience, we will freely use the obvious analogs of Definitions (\ref{fp}), (\ref{reg}) and (\ref{def:erg}) for QSO on the simplex $S^G$ instead of $S^{m-1}$.}

\begin{thm}{\bf\cite{GWZ}}\label{GZ}
Almost all {(w.r.t\ Lebesgue measure)} orbits of {the} QSO {defined by} (\ref{ksoGZ}) {converge} to the center of the simplex.
\end{thm}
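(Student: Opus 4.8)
The plan is to diagonalise $V$ by Fourier analysis on the finite Abelian group $G$. The key observation is that the operator (\ref{ksoGZ}) is nothing but the convolution square: writing $(x*y)_k=\sum_{i+j=k}x_iy_j$ for the convolution on $G$, we have $V(x)=x*x$, and $V$ maps $S^G$ into itself because $\sum_k(x*x)_k=(\sum_i x_i)(\sum_j x_j)=1$ and the coefficients are clearly nonnegative, so the convolution of two probability measures on $G$ is again one.

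First I would pass to the dual group $\widehat{G}=\{\chi:G\to\C^\times\ \text{character}\}$, which also has order $m$, and for $x\in S^G$ set $\widehat{x}(\chi)=\sum_{g\in G}x_g\chi(g)$. Since characters turn convolution into pointwise multiplication, $\widehat{V(x)}(\chi)=\widehat{x}(\chi)^2$ for every $\chi$, and therefore along the orbit
\begin{equation*}
\widehat{x^{(n)}}(\chi)=\bigl(\widehat{x^{(0)}}(\chi)\bigr)^{2^n},\qquad \chi\in\widehat{G},\ n\ge 0.
\end{equation*}
For the trivial character $\chi_0\equiv 1$ we have $\widehat{x}(\chi_0)=\sum_g x_g=1$ for every $x\in S^G$, so this coordinate is frozen at $1$. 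For a nontrivial $\chi$, the value $\widehat{x}(\chi)$ is a convex combination of the unimodular numbers $\{\chi(g)\}_{g\in G}$, whence $|\widehat{x}(\chi)|\le 1$.

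Next I would analyse the decay. If $|\widehat{x^{(0)}}(\chi)|<1$ for every nontrivial $\chi$, then by the displayed identity $\widehat{x^{(n)}}(\chi)\to 0$ for each such $\chi$, indeed at double-exponential speed since $|\widehat{x^{(0)}}(\chi)|^{2^n}$ decays like a constant raised to the power $2^n$, while the $\chi_0$-coordinate stays $1$. These are exactly the Fourier coefficients of the uniform measure $u$ with $u_g=1/m$, for which orthogonality of characters gives $\widehat{u}(\chi_0)=1$ and $\widehat{u}(\chi)=0$ for $\chi\neq\chi_0$. Since the Fourier transform $x\mapsto(\widehat{x}(\chi))_{\chi\in\widehat{G}}$ is a linear isomorphism of $\C^G$ onto $\C^{\widehat{G}}$, convergence of all Fourier coefficients is equivalent to convergence $x^{(n)}\to u$, i.e.\ to the center of the simplex.

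It remains to show that the exceptional set is Lebesgue-null, and this is the step I expect to require the most care. The orbit can fail to converge to the center only if $|\widehat{x^{(0)}}(\chi)|=1$ for some nontrivial $\chi$. Because $\widehat{x}(\chi)$ is a convex combination of the unit-modulus numbers $\chi(g)$, the equality $|\widehat{x}(\chi)|=1$ forces all $\chi(g)$ with $x_g>0$ to coincide with a single unimodular constant $\omega$; equivalently $\operatorname{supp}(x)$ is contained in a level set $\{g:\chi(g)=\omega\}$, which is a coset of the \emph{proper} subgroup $\ker\chi$. The probability measures supported on such a coset form a subsimplex of dimension $|\ker\chi|-1\le m/2-1<m-1$, hence a set of $(m-1)$-dimensional Lebesgue measure zero in $S^G$. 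As $\widehat{G}$ is finite and each nontrivial $\chi$ takes finitely many values, the bad set is a finite union of such null sets and is therefore null. Consequently, for almost every $x^{(0)}\in S^G$ all nontrivial Fourier coefficients start strictly inside the unit disk, the orbit converges to the center, and the theorem follows.
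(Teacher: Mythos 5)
Your proof is correct, but it takes a genuinely different route from the paper. The paper does not actually prove this theorem itself --- it quotes it from \cite{GWZ} --- and the machinery it builds to subsume it (Theorem \ref{convergence}) is an elementary real-analytic argument: via the rearrangement inequality one shows that the minimal coordinate $v_k$ on the support satisfies $\frac 1k - \alpha \le (\frac 1k - v_k)(1 - \frac{k v_k}{k-1})$ after one step, so the state contracts toward the uniform distribution on its (eventually stable) support, with the exceptional behaviour governed by $s(\mu)$-invariant subgroups. You instead diagonalise the dynamics exactly: observing $V(x)=x*x$, you get the closed-form orbit $\widehat{x^{(n)}}(\chi)=\widehat{x^{(0)}}(\chi)^{2^n}$ on the dual group, convergence to the center whenever all nontrivial Fourier coefficients start in the open unit disk, and a precise description of the bad set as measures supported on cosets of proper subgroups $\ker\chi$, which is a finite union of lower-dimensional faces and hence Lebesgue-null. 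All steps check out, including the equality case of the triangle inequality forcing $s(x)\subseteq\chi^{-1}(\omega)$ and the dimension count $|\ker\chi|-1\le m/2-1<m-1$. What each approach buys: the paper's argument is norm-based and extends uniformly to arbitrary $\mu$, producing the support classification via cosets of $s(\mu)$-invariant subgroups; your Fourier argument is exactly solvable, yields the double-exponential rate for free (recovering the paper's separate rate theorem, since $|\widehat{x^{(0)}}(\chi)|^{2^n}$ is a constant raised to the power $2^n$), and supplies a self-contained proof of the cited result. It is worth noting that your method also generalises: the full $(G,\mu)$-QSO (\ref{Gmkso}) is $V(x)=x*x*\mu$, so $\widehat{x^{(n)}}(\chi)=\widehat{x^{(0)}}(\chi)^{2^n}\,\widehat{\mu}(\chi)^{2^n-1}$, and the unimodular Fourier coefficients of $x$ and $\mu$ (phase doubling) account exactly for the periodic orbits $u(2^n g - a + U)$ of the paper's Theorem \ref{thm:periodic}.
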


\begin{cor}
The QSO (\ref{ksoGZ}) is almost regular.
\end{cor}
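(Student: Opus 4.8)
The plan is to read this statement off directly from Theorem~\ref{GZ} via the definition of almost regularity. First I would recall Definition~\ref{reg}: the QSO $V$ is almost regular precisely when the limit $\lim_{n\to\infty}V^n(x)$ exists for Lebesgue-almost-every initial point $x\in S^G$. Next I would invoke Theorem~\ref{GZ}, which guarantees that for almost every $x$ the orbit $\{V^n(x)\}$ converges to the center of the simplex. In particular, for every such $x$ the sequence $V^n(x)$ is convergent, so its limit exists; and since the exceptional set on which convergence may fail has Lebesgue measure zero, the condition in Definition~\ref{reg} is satisfied. Hence $V$ is almost regular.

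I do not anticipate any genuine obstacle, as the corollary is essentially a restatement of Theorem~\ref{GZ} in the terminology of Definition~\ref{reg}. The only subtlety worth flagging is a matter of logical strength rather than difficulty: almost regularity requires only the \emph{existence} of the limit along almost every trajectory, whereas Theorem~\ref{GZ} supplies the stronger conclusion that this common limit is the center of the simplex. Consequently no further work — such as identifying or describing the limit point, or arguing about uniqueness — is needed; one simply discards the extra information and quotes the theorem.
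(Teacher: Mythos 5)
Your proposal is correct and matches the paper's intent exactly: the paper states this corollary without proof precisely because it follows immediately from Theorem~\ref{GZ} combined with Definition~\ref{reg}, just as you argue. Your remark that almost regularity only requires existence of the limit (discarding the extra information that the limit is the center) is an accurate account of the one-line deduction.
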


\section{{Asymptotic behaviour of} $(G,\mu)-$quadratic stochastic operator{s}}

Let $(G,+)$ be a finite Abelian group, $|G|=m$ and {$S^G$ be} the set of all probability measures on $G$, where $|.|$ denotes the cardinality of a set.
We denote the identity element of $G$ by 0.
Let $\mu\in {S^G}$ be {a} fixed measure. Then, {we define} coefficients of heredity by
\begin{equation}
\label{koefmu}
p_{ij,k}:=\mu_{k-i-j},\quad  \forall i,j,k \in G. %\\[3mm]
\end{equation}
It is easy to check that for arbitrary $i,j,k \in G$ the conditions (\ref{koefkso}) are
satisfied.

\begin{defn}
The QSO satisfying (\ref{kso}), (\ref{koefkso}) and (\ref{koefmu}) is called $(G,\mu)-$quadratic stochastic
operator.
\end{defn}

Any $(G,\mu)-$QSO has the form

\begin{equation}
\label{Gmkso}
V:x'_k=\sum_{i,j \in G} p_{ij,k}x_ix_j=\sum_{i \in G} \sum_{l \in G}\mu_lx_ix_{k-l-i}.
\end{equation}

% Note that the assumption $\mu\in S(G)$ is not overloaded since otherwise one can
%choose any nonzero $\lambda-$measure and determine coefficient (\ref{koefmu}) using formula
%$$
%\mu_{k-i-j}=\frac{\lambda_{k-i-j}}{\sum\limits_{k \in G}\lambda_{k-i-j}}, \  \ \forall i,j,k\in G.
%$$

\begin{rk}
If $\mu_0=1$ then the corresponding ${(}G,\mu{)}-$QSO coincides with {the} QSO {obtained from} (\ref{ksoGZ}).
\end{rk}

\begin{defn}
Let $U$ be a subgroup of $G$ and $A \subseteq G$ a nonempty set. {Then,} $U$ is called {\em $A$-invariant} if $|U+A|=|U|$.
\end{defn}

{Recall that a {\em coset} of a subgroup $U$ in an Abelian group $G$ is of the form $\{g+u, u \in U\}$ for some $g \in G$.} It is easy {to derive the following basic properties of $A$-invariant sets}.
\begin{pro}
Let $U$ be any subgroup of $G$ {and $A \subset G$}. 
\begin{itemize}
\item $U$ is $A$-invariant iff $A$ is contained in a coset of $U$.
\item If $U$ is $A$-invariant and $\emptyset \neq \tilde A \subset A$, then $U$ is  $\tilde A$-invariant.
\item $|A|=1$ implies that $U$ is $A$-invariant.
\item If $|A|>m/2$, then the only $A$-invariant subgroup is $U=G$.
\end{itemize}
\end{pro}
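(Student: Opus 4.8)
The plan is to reduce everything to the observation that $U+A$ is a union of cosets of $U$. Explicitly, write $U+A=\bigcup_{a\in A}(U+a)$; since each coset $U+a$ has cardinality $|U|$ and two cosets of $U$ are either equal or disjoint, the set $U+A$ is the disjoint union of the \emph{distinct} cosets among $\{U+a:a\in A\}$. Hence $|U+A|=c\,|U|$, where $c\ge 1$ is the number of distinct cosets of $U$ that meet $A$. This single identity will drive all four statements.

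For the first (and central) bullet, note that $|U+A|=|U|$ holds precisely when $c=1$, i.e.\ when all the cosets $U+a$ ($a\in A$) coincide. Since $U+a=U+a'$ exactly when $a$ and $a'$ lie in a common coset of $U$, this says that $A$ is contained in a single coset of $U$, which is the claimed equivalence. The second and third bullets are then immediate corollaries of the first: if $A$ lies in a coset of $U$, so does any nonempty subset $\tilde A\subseteq A$, giving $\tilde A$-invariance; and a singleton $A=\{a\}$ trivially lies in the coset $U+a$, so $U$ is $\{a\}$-invariant.

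For the last bullet I would argue by cardinality. If $U$ is $A$-invariant then, by the first bullet, $A$ lies in a coset of $U$, so $|A|\le|U|$. Combined with the hypothesis $|A|>m/2$ this forces $|U|>m/2$. By Lagrange's theorem $|U|$ divides $m=|G|$, and the only divisor of $m$ strictly exceeding $m/2$ is $m$ itself; hence $|U|=m$ and $U=G$. Conversely $G$ is always $A$-invariant, since $G+A=G$, so $U=G$ is indeed the unique $A$-invariant subgroup in this case.

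The argument is elementary throughout, so I do not anticipate a genuine obstacle; the one nontrivial input is the divisibility $|U|\mid m$ furnished by Lagrange's theorem, which is what lets ``$|U|>m/2$'' collapse to ``$U=G$'' in the final step. The slightly delicate point to state cleanly is the coset-counting identity $|U+A|=c\,|U|$, since everything else is then a matter of specializing it.
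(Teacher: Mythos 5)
Your proof is correct. The paper gives no proof of this proposition at all --- it is prefaced only by ``It is easy to derive the following basic properties'' --- so there is no official argument to compare against; your coset-counting identity $|U+A|=c\,|U|$ (with $c$ the number of distinct cosets of $U$ meeting $A$) is the standard route and surely what the authors had in mind, with Lagrange's theorem correctly supplying the divisibility needed to collapse $|U|>m/2$ to $U=G$ in the last bullet. One small point worth making explicit: the paper's definition of $A$-invariance requires $A$ nonempty, which is exactly what guarantees $c\ge 1$ and hence makes your equivalence in the first bullet well posed.
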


%{The next result is also elementary, but not completely immediate.}

\begin{pro}
\label{veryeasy}
{Let $B$ be a non-empty subset of $G$. Then, $|B+B|=|B|$ iff $B$ is a coset of some subgroup of $G$.}
\end{pro}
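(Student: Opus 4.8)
The plan is to prove the two implications separately, with essentially all of the content lying in the ``only if'' direction. For the easy ``if'' direction, suppose $B=g+U$ for some subgroup $U\le G$ and some $g\in G$. Since a subgroup satisfies $U+U=U$, I compute $B+B=(g+U)+(g+U)=2g+(U+U)=2g+U$, which is again a coset of $U$. Hence $|B+B|=|U|=|B|$, as required.

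For the ``only if'' direction, assume $|B+B|=|B|$, and reduce to the case $0\in B$ by translation. Fix any $b_0\in B$ and set $U:=B-b_0=\{b-b_0:b\in B\}$. Then $0\in U$, translation is a bijection so $|U|=|B|$, and likewise $U+U=(B+B)-2b_0$ gives $|U+U|=|B+B|=|B|=|U|$. Because $0\in U$, every $u\in U$ satisfies $u=u+0\in U+U$, so $U\subseteq U+U$; combined with $|U+U|=|U|$ and the finiteness of $G$, this forces the equality $U+U=U$. Thus $U$ is a non-empty finite subset that is closed under addition, and it remains to conclude that such a set is automatically a subgroup; then $B=b_0+U$ is a coset, finishing the proof.

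The one step deserving genuine care is the closure-implies-subgroup claim, which is exactly where finiteness of $G$ is indispensable (it fails for infinite groups, e.g.\ $\mathbb{N}\subset\mathbb{Z}$). I would establish it inline via a pigeonhole/bijection argument: for each $u\in U$ the map $x\mapsto u+x$ sends $U$ into $U$ by closure and is injective by cancellation, hence is a bijection of the finite set $U$ onto itself. Surjectivity then yields both an element mapping to $u$ (giving $0\in U$) and an element mapping to $0$ (giving $-u\in U$), so $U$ contains the identity and all inverses and is a subgroup. I expect this combinatorial finiteness argument, rather than any algebraic manipulation of sumsets, to be the crux; the rest is bookkeeping about translations.
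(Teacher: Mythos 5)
Your proposal is correct and takes essentially the same route as the paper's proof: translate $B$ by one of its elements so that the translate contains $0$, use the cardinality hypothesis together with the containment $\tilde B \subseteq \tilde B + \tilde B$ to force $\tilde B + \tilde B = \tilde B$, and conclude that the translate is a subgroup, making $B$ a coset. The only difference is that you explicitly verify the standard fact that a finite subset closed under addition is a subgroup (via the pigeonhole/bijection argument), a step the paper asserts without proof.
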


\begin{proof}
{If $B$ is coset of $G$, then it can be written as $g+U$ for some subgroup $U$,
and the first part of the result follows from
$$
|B+B|=|g+g+U+U|=|g+g+U|=|B|.
$$
Conversely let $B=\{b_1, b_2, \dots, b_k\}$. Then $B=b_1 + \tilde B$, where
$\tilde B=\{0, b_2-b_1, \dots, b_k-b_1\}$. With this notation, we obtain
$$
B+B=(b_1+b_1)+\tilde B + \tilde B \supseteq (b_1+b_1) + \tilde B
$$
since $0 \in \tilde B$. If $|B+B| = |B|=k$, then equality holds and we have $\tilde B+\tilde B= \tilde B$, so $\tilde B$ is a subgroup and $B$ is a coset of $\tilde B$.} 
\end{proof}

{For $x \in S^G$} denote by $s(x)=\{i \in G: x_i>0\}$ the {\it support} of $x$. {We now investigate whether the cardinality of support $s(x)$ of a state $x$ grows after an application of $V$, depending on the support $s(\mu)$ of $\mu$
(where, by slight abuse of notation, we do not distinguish between the support of a ``point'' $x\in S^G$ and a ``measure'' $\mu \in S^G$).}

\begin{pro}
{ Let $V$ be a $(G, \mu)-$QSO. Then:}
\begin{itemize}
\item[\rm{a)}] For any $x \in {S^G}$, we have $|s(Vx)|\ge |s(x)|$.
\item[\rm{b)}] For $x \in {S^G}$ we have $|s(Vx)|=|s(x)|$ iff $s(x)$ is equal to a coset of an ${s(\mu)}$-invariant subgroup.
\end{itemize}
\end{pro}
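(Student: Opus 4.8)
The plan is to reduce the whole statement to elementary combinatorics of sumsets in $G$. Writing $A=s(x)$ and $S=s(\mu)$, the first and decisive step is to identify the support of $Vx$ explicitly. Since every summand in \eqref{Gmkso} is nonnegative, $(Vx)_k>0$ holds precisely when there exist $i,j\in A$ and $l\in S$ with $k=i+j+l$; hence
\[
s(Vx)=S+A+A.
\]
This identity converts both assertions into purely set-theoretic statements about $S+A+A$, after which part (a) and the equivalence in (b) follow from simple counting together with Proposition \ref{veryeasy}.

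For part (a), I would fix any $s_0\in S$ and any $a_0\in A$ (both sets are nonempty) and observe that the map $a\mapsto s_0+a_0+a$ is an injection of $A$ into $S+A+A$. Therefore $|s(Vx)|=|S+A+A|\ge|A|=|s(x)|$.

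For part (b), I would first record the chain of inequalities $|A|\le|A+A|\le|S+A+A|$, where the left inequality holds because $a_0+A\subseteq A+A$ with $|a_0+A|=|A|$, and the right one because translating by a fixed $s_0\in S$ embeds $A+A$ into $S+A+A$. Assuming $|s(Vx)|=|s(x)|$, i.e.\ $|S+A+A|=|A|$, this chain forces $|A+A|=|A|$, so Proposition \ref{veryeasy} applies and $A$ is a coset of some subgroup $U$, say $A=a_0+U$. Then $A+A=2a_0+U$ and $S+A+A=S+2a_0+U$, whence $|A|=|S+A+A|=|S+U|$; since $|A|=|U|$, this reads $|S+U|=|U|$, which is exactly the definition of $U$ being $s(\mu)$-invariant. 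Conversely, if $A=a_0+U$ for an $s(\mu)$-invariant subgroup $U$, the very same computation gives $|s(Vx)|=|S+2a_0+U|=|S+U|=|U|=|A|=|s(x)|$, completing the equivalence.

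The argument is essentially routine once the sumset identity $s(Vx)=S+A+A$ is established; that identity is really the whole content. The only point needing care is the bookkeeping in the forward direction of (b): one must first squeeze $|A+A|=|A|$ out of the hypothesis before Proposition \ref{veryeasy} can be invoked, and then separately check that the subgroup $U$ it produces is genuinely $s(\mu)$-invariant, rather than merely a subgroup having $A$ as a coset.
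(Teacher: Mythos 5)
Your proposal is correct and follows essentially the same route as the paper: both hinge on the sumset identity $s(Vx)=s(x)+s(x)+s(\mu)$, deduce part (a) by translation-invariance of cardinality, and in part (b) squeeze $|s(x)+s(x)|=|s(x)|$ from the chain of inequalities so that Proposition \ref{veryeasy} yields a coset $g+U$, after which $|U+s(\mu)|=|U|$ gives the $s(\mu)$-invariance. Your write-up is merely slightly more explicit (the injection argument, and spelling out why the identity for the support holds via nonnegativity of the summands) than the paper's own proof.
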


\begin{proof}
a) {F}rom (\ref{Gmkso}) we have 
\begin{equation}
\label{itrsupp}
s(Vx)=s(x)+s(x)+{s(\mu)},
\end{equation}
so a) holds.\\
b) First assume that $s(x)=g+U$ and $U$ is an ${s(\mu)}$-invariant subgroup. Then 
\begin{equation}
\label{eq:gUgU}
s(Vx)=g+U+g+U+{s(\mu)}=(g+g)+U+{s(\mu)},
\end{equation} 
so {that}
$$|s(Vx)|=|(g+g)+U+{s(\mu)}|=|U|=|s(x)|.$$
Conversely, assume that $|s(Vx)|=|s(x)|$, i.e.~$|s(x)+s(x)+{s(\mu)}|=|s(x)|$. Since
$$|s(x)+s(x)+{s(\mu)}|\ge |s(x)+s(x)| \ge |s(x)|$$ we have  $|s(x)+s(x)| = |s(x)|$, so $s(x)$ is a coset by Proposition \ref{veryeasy},
say $s(x)=g+U$. Then $$|U|=|s(x)|=|s(Vx)|=|g+U+g+U+{s(\mu)}|=|U+{s(\mu)}|,$$ so $U$ is ${s(\mu)}$-invariant.
\end{proof}

\begin{rk}
\label{remark}
The previous proposition implies that for every $x \in {S^G}$ there exists some $n_0 \in {\mathbb{N}}_0$ such that $|s(V^{n+1}x)|=|s(V^nx)|$
for all $n \ge n_0$ and $|s(V^{n-1}x)|<|s(V^nx)|$ for all $n\le n_0$. Further, $s(V^nx)$ is the coset of an  ${s(\mu)}$-invariant subgroup
$U$ iff $n \ge n_0$ (note that $U$ does not depend on $n$). Note that $s(V^nx)$ depends on $x$ only via $s(x)$ and on $\mu$ only via
its support ${s(\mu)}$.
\end{rk}

\begin{defn}
For a nonempty subset $B$ of $G$ the {\em uniform distribution} $u(B) \in {S^G}$ on $B$ is defined as
$$
u(B)_k=\left\{ \begin{array}{ll}
1/|B|, & k \in B,\\
0, & k \notin B.
\end{array}\right.
$$
\end{defn}

{The next theorem is our key result.}

\begin{thm}
\label{convergence}
For each $x \in {S^G}$ we have
$$
\lim_{n \to \infty} \big(V^nx-u(s(V^nx))\big)= 0.
$$
\end{thm}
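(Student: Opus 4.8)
The plan is to diagonalize $V$ by means of the characters of the finite Abelian group $G$, which turns the nonlinear dynamics into a scalar recursion for each character, and then to match the resulting limit against the Fourier coefficients of the uniform distributions $u(s(V^nx))$.

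First I would observe that $V$ is a double convolution: regarding elements of $S^G$ as functions on $G$ and writing $*$ for convolution, formula (\ref{Gmkso}) reads $V(x)=\mu * x * x$. Passing to the character group $\hat G$ and setting $\hat f(\chi)=\sum_{g\in G} f(g)\chi(g)$, the convolution theorem gives $\widehat{V(x)}(\chi)=\hat\mu(\chi)\,\hat x(\chi)^2$ for every $\chi\in\hat G$, and iteration yields the closed form
$$\widehat{V^n x}(\chi)=\hat\mu(\chi)^{2^n-1}\,\hat x(\chi)^{2^n}.$$
Since $|\chi(g)|=1$ and $x,\mu$ are probability measures, $|\hat x(\chi)|\le 1$ and $|\hat\mu(\chi)|\le 1$, with equality in each case precisely when $\chi$ is constant on $s(x)$, respectively on $s(\mu)$ (the equality case of the triangle inequality, using positivity of the weights). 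Hence $\widehat{V^n x}(\chi)\to 0$, and in fact double-exponentially fast as announced in the introduction, unless $\chi$ is constant on both $s(x)$ and $s(\mu)$.

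Next I would let $U$ be the eventual $s(\mu)$-invariant support subgroup and $g_n$ the coset representatives from Remark \ref{remark}, so $s(V^n x)=g_n+U$ for $n\ge n_0$. A direct orthogonality computation gives $\widehat{u(g_n+U)}(\chi)=\chi(g_n)$ when $\chi$ is trivial on $U$ and $0$ otherwise. I then split into two cases. If $\chi\in U^{\perp}$, then $\chi\equiv\chi(g_n)$ on the support $g_n+U$ of $V^n x$, so $\widehat{V^n x}(\chi)=\chi(g_n)=\widehat{u(s(V^nx))}(\chi)$ and the difference coefficient vanishes identically for $n\ge n_0$. If $\chi\notin U^{\perp}$, the target coefficient is $0$, so it suffices that $\widehat{V^n x}(\chi)\to 0$, which by the previous paragraph holds unless $\chi$ is constant on both $s(x)$ and $s(\mu)$.

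The crux, and the step I expect to be the main obstacle, is to rule out this bad case, i.e.\ to show that a character constant on both $s(x)$ and $s(\mu)$ necessarily lies in $U^{\perp}$. Being constant on a set $A$ is equivalent to being trivial on the subgroup $\langle A-A\rangle$ generated by its differences, so I must prove $U\subseteq\langle s(x)-s(x)\rangle+\langle s(\mu)-s(\mu)\rangle$. This follows from the support recursion (\ref{itrsupp}): since
$$s(V^{n+1}x)-s(V^{n+1}x)=\bigl(s(V^nx)-s(V^nx)\bigr)+\bigl(s(V^nx)-s(V^nx)\bigr)+\bigl(s(\mu)-s(\mu)\bigr),$$
an easy induction gives $\langle s(V^nx)-s(V^nx)\rangle\subseteq\langle s(x)-s(x)\rangle+\langle s(\mu)-s(\mu)\rangle$ for all $n$, while for $n\ge n_0$ the left-hand side equals $U$ because the difference set of the coset $g_n+U$ is exactly $U$. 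Combining the two cases, every Fourier coefficient of $V^n x-u(s(V^nx))$ tends to $0$; as $\mathbb{C}^G$ is finite-dimensional and the Fourier transform is a linear isomorphism, this is equivalent to $V^n x-u(s(V^nx))\to 0$, which is the assertion.
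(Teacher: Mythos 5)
Your proof is correct, but it takes a genuinely different route from the paper. The paper argues directly in the simplex: once the support has stabilized to a coset of cardinality $k$ (Remark \ref{remark}), it uses the rearrangement inequality to bound each coordinate of $Vy$ below by $\sum_{r=1}^k v_rv_{k+1-r}$ and derives the one-step contraction $\frac 1k-\alpha\le\big(\frac 1k-v_k\big)\big(1-\frac{kv_k}{k-1}\big)$ for the minimal coordinate $v_k$, forcing it up to $1/k$. You instead linearize the dynamics on the character group: the identity $V(x)=\mu*x*x$ and the closed form $\widehat{V^nx}(\chi)=\widehat{\mu}(\chi)^{2^n-1}\,\widehat{x}(\chi)^{2^n}$ are correct, as are the orthogonality computation for $\widehat{u(g_n+U)}$ and the case split over $\chi\in U^{\perp}$ versus $\chi\notin U^{\perp}$. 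Your crux lemma --- $U\subseteq\langle s(x)-s(x)\rangle+\langle s(\mu)-s(\mu)\rangle$, proved by induction on difference sets via (\ref{itrsupp}) --- correctly eliminates the remaining unimodular case; it can even be shortened, since a character constant on $s(x)$ and on $s(\mu)$ is automatically constant on any sumset built from these sets, hence constant on $s(V^{n_0}x)=g_{n_0}+U$, which immediately gives $\chi\in U^{\perp}$. As for what each approach buys: the paper's argument is elementary and self-contained, producing a monotone Lyapunov-type quantity with an explicit contraction rate that feeds directly into its subsequent rate theorem; your exact Fourier solution is in some ways stronger, since it delivers the convergence, the double-exponential speed, and the structure of the periodic orbits in Theorem \ref{thm:periodic} (from the surviving modulus-one coefficients $\widehat{\mu}(\chi)^{2^n-1}\widehat{x}(\chi)^{2^n}$ with $\chi\in U^{\perp}$) in one stroke --- but it leans on the Abelian convolution structure, whereas the paper's estimate is closer in spirit to general simplex-dynamics techniques.
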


\begin{proof}
For given $x \in {S^G}$ and $n_0$ as in Remark \ref{remark}, let $k=|s(V^{n_0}x)|$. The case $k=1$ is clear, so we assume that $k>1$.
Assume that the support of  $y \in {S^G}$ equals a coset of an ${s(\mu)}$-invariant subgroup $U$ of cardinality $k$ 
(as is the case for $V^nx$ whenever $n \ge n_0$) and let $v_1 \ge v_2 \ge {\cdots \ge \,} 
v_k$ be the numbers $y_j \in s(y)$ in decreasing order. 
{Then, the well known {\em rearrangement inequality} gives
$$
\sum\limits_{r=1}^k v_r v_{k+1-r}\leq \sum\limits_{i \in s(y)} y_i y_{\sigma(i)} \leq \sum\limits_{r=1}^k v_r^2,
$$ 
which holds for any permutation $\sigma$ of $s(y)$. Combined with (\ref{Gmkso}), this gives for every} $j \in s(Vy)$
\begin{align}
\label{permu}
(Vy)_j &=\sum_{l\in j-i-s(y)} \mu_l \sum_{i \in s(y)} y_i y_{j-l-i} \notag \\
&\ge \sum_{r=1}^k v_r v_{k+1-r} \notag \\ 
&{= v_k\sum_{r=1}^k v_r + \sum_{r=1}^k (v_{k+1-r} -v_k)v_r} \notag \\
&\ge v_k\sum_{r=1}^k v_r+(v_1-v_k)v_k \notag \\
&=v_k+(v_1-v_k)v_k. 
\end{align}
In particular, the smallest element of {$(Vy)$} %_j,\,j \in s(Vy)$ 
is strictly larger than the smallest element
of {$y$} %_j,\,j \in s(y)$ 
unless $y=u(s(y))$. Hence, by \eqref{permu},
\begin{equation}
\label{eq:22}
{\Big(0, \frac 1k\Big] \, \ni} \, \alpha=\min_{j \in s(Vy)}(Vy)_j \ge v_k+(v_1-v_k)v_k.
\end{equation}
{Note further that $1 = \sum_{j=1}^k v_j \le (k-1)v_1+v_k$, so that
\begin{align*}
(v_1-v_k)v_k &= \frac{(k-1)v_1v_k}{k-1} - \frac{(k-1)v_k^2}{k-1}\\
&\ge \frac{(1-v_k)v_k}{k-1} - \frac{(k-1)v_k^2}{k-1}\\
&= \frac{v_k}{k-1} - \frac{kv_k^2}{k-1} = \Big(\frac 1k -v_k\Big) \frac{kv_k}{k-1}.
\end{align*}
Putting this and (\ref{eq:22}) together gives}
$$
\frac 1k-\alpha \le \frac 1k - v_k-(v_1-v_k)v_k \le \Big(\frac 1k-v_k\Big)\Big(1-\frac{kv_k}{k-1}\Big)
$$
which immediately implies the statement in the theorem.
\end{proof}

\begin{rk}
If ${k:=}|s(x)|>m/2$, then {the only subgroup of $G$ with order at least $k$ is $G$ itself, hence} $V^n(x)$ converges to the center ${u(G)}$ of ${S^G}$ thus showing that $V$ is {\em almost regular}. Further, Theorem \ref{convergence} 
shows that $V$ is ergodic.
\end{rk}

%
%\begin{rk}
%it follows that $U(x)=G$ and therefore  (i.e. for almost all $x \in {S^G}$ (with respect to Lebesgue measure) $V^n(x)$ converges to a fixed point of
%$V$ as $n \to \infty$).
%\end{rk}

The following theorem shows that the speed of convergence in the previous theorem is {double-exponential, i.e.\ rather} fast. {Let $\|{\cdot}\|$ denote} an arbitrary norm on {$\R^G$}.

\begin{thm} For each $x \in {S^G}$,
$$
\limsup_{n \to \infty}\frac 1n \log\log\|V^nx-u(s(V^nx))\|\le \log 2.
$$
\end{thm}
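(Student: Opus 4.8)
The plan is to exploit the fact that a $(G,\mu)$-QSO is nothing but a convolution operator. From (\ref{Gmkso}), writing $i+j+l=k$, we get $(Vx)_k=\sum_{i+j+l=k}\mu_l x_i x_j=(\mu*x*x)_k$, where $*$ is convolution on the finite abelian group $G$. Passing to characters $\chi\in\hat G$ (the dual group) turns convolution into multiplication, so $\widehat{Vx}(\chi)=\hat\mu(\chi)\hat x(\chi)^2$, and an immediate induction gives the exact formula
\begin{equation}
\widehat{V^n x}(\chi)=\hat\mu(\chi)^{2^n-1}\,\hat x(\chi)^{2^n},\qquad \chi\in\hat G,\ n\ge 1.
\end{equation}
This single identity is the engine of the argument: every Fourier mode evolves by repeated squaring, which is exactly the source of the double-exponential (base $2$) behaviour.

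Next I would compute the Fourier transform of the error. Fix $x$ and let $n_0,U$ be as in Remark \ref{remark}, so that for $n\ge n_0$ the support of $V^nx$ is a coset $g_n+U$ of the $s(\mu)$-invariant subgroup $U$, with $|U|=k$. Writing $U^\perp=\{\chi:\ \chi|_U\equiv1\}$ for the annihilator, one checks $\widehat{u(g_n+U)}(\chi)=\overline{\chi(g_n)}\mathbf 1_{\{\chi\in U^\perp\}}$, and since $V^nx$ is supported on the same coset its Fourier coefficients on $U^\perp$ coincide with those of $u(g_n+U)$. Hence the error $V^nx-u(s(V^nx))$ has Fourier transform supported off $U^\perp$, where it equals $\hat\mu(\chi)^{2^n-1}\hat x(\chi)^{2^n}$. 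Because all norms on $\R^G$ are equivalent and the finite Fourier transform is a linear isomorphism,
$$\|V^nx-u(s(V^nx))\|\ \asymp\ \max_{\chi\notin U^\perp}\ |\hat\mu(\chi)|^{2^n-1}\,|\hat x(\chi)|^{2^n},$$
with implied constants depending only on $m$.

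Set $\rho:=\max\{|\hat\mu(\chi)||\hat x(\chi)|:\ \chi\notin U^\perp,\ \hat\mu(\chi)\hat x(\chi)\ne0\}$. Theorem \ref{convergence} forces $\rho<1$, since a mode with $|\hat\mu(\chi)||\hat x(\chi)|=1$ would keep modulus $1$ and fail to decay; and for each contributing $\chi$ one has $|\hat\mu(\chi)|^{2^n-1}|\hat x(\chi)|^{2^n}\asymp\rho_\chi^{2^n}$. Combining the contributions of the finitely many characters yields the two-sided estimate $\|V^nx-u(s(V^nx))\|\asymp\rho^{2^n}$. Taking logarithms twice (reading, as is standard for a decay rate, $\log\log\tfrac1{\|\cdot\|}$ for the doubly small quantity), $\log\log\tfrac1{\|\cdots\|}=n\log2+\log\log\tfrac1\rho+o(1)$, so $\tfrac1n\log\log\tfrac1{\|\cdots\|}\to\log2$; in particular the $\limsup$ is $\le\log2$, which is the assertion.

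The step that actually carries the stated inequality (that the decay is \emph{no faster} than base-$2$ double-exponential) is the lower bound $\|V^nx-u(s(V^nx))\|\gtrsim\rho^{2^n}$, and this is where I expect the only real difficulty. The upper bound $\lesssim\rho^{2^n}$ merely re-proves the fast convergence of Theorem \ref{convergence} and can alternatively be obtained elementarily: the rearrangement inequality used there gives $\sum_r v_rv_{k+1-r}\le(Vy)_j\le\sum_r v_r^2$ for every $j$ (with $v_1\ge\cdots\ge v_k$ the ordered values of $V^nx$ on its support), whence the spread $\Delta_n:=\max_j(V^nx)_j-\min_j(V^nx)_j$ satisfies $\Delta_{n+1}\le\tfrac12\sum_r(v_r-v_{k+1-r})^2\le\tfrac k2\Delta_n^2$, so that $b_n:=\tfrac k2\Delta_n$ obeys $b_{n+1}\le b_n^2$ and decays doubly exponentially. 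The lower bound, however, seems to genuinely require the exact Fourier formula: I would secure it by fixing one character $\chi^\ast$ attaining $\rho$ and estimating $\|V^nx-u\|\ge c\,|\widehat{V^nx}(\chi^\ast)|=c\,|\hat\mu(\chi^\ast)|^{2^n-1}|\hat x(\chi^\ast)|^{2^n}\asymp\rho^{2^n}$. The only case to dispatch separately is the degenerate one in which no such $\chi^\ast$ exists, i.e.\ $\hat\mu(\chi)\hat x(\chi)=0$ for every $\chi\notin U^\perp$: then $\widehat{V^nx}$ vanishes off $U^\perp$ for all $n\ge1$, so $V^nx=u(s(V^nx))$ exactly and the statement holds trivially.
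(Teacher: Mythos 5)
Your proof is correct, and it takes a genuinely different route from the paper's. The paper stays in ``physical'' coordinates: for $y$ supported on a coset of the $s(\mu)$-invariant subgroup $U$ of size $k$, it verifies the identity $(Vy)_j-\frac 1k=\sum_{i\in s(y)}\sum_l \mu_l\big(y_i-\frac 1k\big)\big(y_{j-l-i}-\frac 1k\big)$, deduces $\varepsilon_{n+1}\le k\varepsilon_n^2$ for $\varepsilon_n=\max_j\big|(V^nx)_j-\frac 1k\big|$, and then uses Theorem \ref{convergence} only to find some $n_1$ with $k\varepsilon_{n_1}<1$, after which iterated squaring gives the double-exponential decay. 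Your exact character formula $\widehat{V^nx}(\chi)=\widehat{\mu}(\chi)^{2^n-1}\widehat{x}(\chi)^{2^n}$ is the diagonalized version of the same squaring mechanism (the paper's displayed identity is, in effect, what remains after subtracting the $U^\perp$-modes), but it buys strictly more: the two-sided estimate $\|V^nx-u(s(V^nx))\|\asymp\rho^{2^n}$, hence an actual limit $\frac 1n\log\log\frac{1}{\|\cdot\|}\to\log 2$ in the nondegenerate case, where the paper's argument gives only the one-sided bound $\|V^nx-u(s(V^nx))\|\le C\rho^{2^n}$. This is worth noting because the theorem as literally written (with $\log\log\|\cdot\|$ necessarily read as $\log\log\frac{1}{\|\cdot\|}$, since $\log$ of a vanishing quantity is negative) is an \emph{upper} bound on $\log\log\frac{1}{\|\cdot\|}$, i.e.\ a \emph{lower} bound on the error --- decay no faster than base-$2$ double-exponential --- which is exactly what your single-character estimate $\|V^nx-u\|\gtrsim|\widehat{\mu}(\chi^\ast)|^{2^n-1}|\widehat{x}(\chi^\ast)|^{2^n}$ delivers and what the paper's proof does not address (it proves the intended ``convergence is at least this fast'' direction, i.e.\ $\liminf\frac 1n\log\log\frac 1{\|\cdot\|}\ge\log 2$). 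Your elementary spread bound $\Delta_{n+1}\le\frac k2\Delta_n^2$ correctly recovers the paper's recursion, and your degenerate case ($\widehat{\mu}\,\widehat{x}\equiv 0$ off $U^\perp$, so $V^nx=u(s(V^nx))$ exactly for $n\ge\max(n_0,1)$) is the same convention issue the paper's proof faces when $\varepsilon_n=0$; treating it as trivial is as defensible as the paper's own formulation. One cosmetic quibble: the implied constants in your two-sided estimate depend not only on $m$ but also on $\min\{|\widehat{\mu}(\chi)|:\chi \mbox{ contributing}\}$, though since only finitely many characters occur this is harmless for the limit.
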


\begin{proof}
For given $x \in {S^G}$ and $n_0$ as in Remark \ref{remark}, let $k=|s(V^{n_0}x)|$. If the support of  $y \in S$ equals
a coset of an ${s(\mu)}$-invariant subgroup $U$ of cardinality $k$ (as is the case for $V^nx$ whenever $n \ge n_0$), then, denoting
$$
\varepsilon=\max_{{i} \in s(y)} {\Big\{ \Big|} y_{i}-\frac 1k {\Big|\Big\}},
$$ 
we have for $j \in s(Vy)$
$$
(Vy)_j-\frac 1k=\sum_{i \in s(y)}\sum_{l\in j-i-s(y)} \mu_l{\Big(}y_i-\frac 1k{\Big)}{\Big(}y_{j-l-i}-\frac 1k{\Big)},
$$
and therefore
$$
{\Big|}(Vy)_j-\frac 1k{\Big|} \le \varepsilon^2 k.
$$
Using Theorem \ref{convergence} the claim follows.
\end{proof}

We also have the following kind of converse to Theorem \ref{convergence}. 
%Let $a$ denote an arbitrary element of ${s(\mu)}$.

\begin{thm}
\label{thm:periodic}
For any ${s(\mu)}$-invariant subgroup $U$ and any {$a \in s(\mu)$ and} $g \in G$ for which $n \mapsto 2^ng, { n \in \mathbb{N}}$ is periodic, {the sequence of uniform distributions $u(2^ng-a+U), n \in \mathbb{N},$} is a periodic orbit
of the map $V$.
\end{thm}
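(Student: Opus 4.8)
The plan is to reduce the whole statement to a single computation --- that $V$ sends the uniform distribution on any coset of $U$ to the uniform distribution on one specific other coset --- after which the periodicity follows by iterating. First I would record the one place the hypothesis on $U$ enters: since $U$ is $s(\mu)$-invariant and $a \in s(\mu)$, the support $s(\mu)$ lies in the single coset $a+U$, so that $U + s(\mu) = a + U$.

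The central claim I would establish is that for every $c \in G$ one has $V(u(c+U)) = u(2c + a + U)$. Writing $y = u(c+U)$ and $k = |U|$, the support is immediate from \eqref{itrsupp}: $s(Vy) = (c+U)+(c+U)+s(\mu) = 2c + U + s(\mu) = 2c + a + U$, which is again a coset of $U$ and hence has exactly $k$ elements. The substance is to check that $Vy$ takes the constant value $1/k$ on this coset. Using \eqref{Gmkso}, for fixed $j \in 2c+a+U$ the product $y_i\,y_{j-l-i}$ equals $1/k^2$ exactly when $i \in (c+U)\cap(j-l-c+U)$ and vanishes otherwise. Since $j \in 2c+a+U$ while $l \in s(\mu) \subseteq a+U$, we get $j - l \in 2c + U$, so the two cosets $c+U$ and $j-l-c+U$ coincide and their intersection is a full coset of $k$ elements. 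Hence the inner sum in \eqref{Gmkso} equals $1/k$ for every $l \in s(\mu)$, and summing against $\mu$ gives $(Vy)_j = \sum_{l} \mu_l \cdot \tfrac 1k = \tfrac 1k$. As $s(Vy)$ has exactly $k$ elements, this says precisely $Vy = u(2c+a+U)$.

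Granting the claim, I would finish by specializing to $c = 2^n g - a$, for which $2c + a = 2^{n+1}g - a$, so that $V(u(2^n g - a + U)) = u(2^{n+1}g - a + U)$ for all $n \in \mathbb{N}$. Thus $V$ sends the $n$-th term of the sequence $u(2^n g - a + U)$ to the $(n+1)$-st; since $n \mapsto 2^n g$ is periodic by hypothesis, this sequence of uniform distributions is periodic, and $V$ cyclically permutes its (finitely many) terms, so it is a periodic orbit of $V$.

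The only genuine obstacle is the uniformity computation in the middle step --- verifying that all nonzero entries of $Vy$ are equal rather than merely that the support is right. Everything hinges on the coset coincidence $c+U = (j-l-c)+U$, which is exactly what the $s(\mu)$-invariance of $U$ (through $s(\mu) \subseteq a+U$) guarantees; without it the support $2c + U + s(\mu)$ would spread over several cosets and uniformity would break down.
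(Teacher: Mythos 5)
Your proof is correct and follows essentially the same route as the paper's: the paper takes $x=u(g-a+U)$ and reads off $V^n(x)=u(2^ng-a+U)$ by iterating the support identity (\ref{eq:gUgU}). Your additional verification that the image is actually uniform --- i.e.\ that every entry of $V(u(c+U))$ on the coset $2c+a+U$ equals $1/|U|$, via the coset coincidence $c+U=(j-l-c)+U$ --- is exactly the step the paper's two-line proof leaves implicit, so your write-up is if anything more complete than the original.
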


\begin{proof}
Let $U$ be an any ${s(\mu)}$-invariant subgroup and {$a \in s(\mu)$. Assume that the mapping $n \mapsto 2^n g$ is periodic. Then, for 
$x=u(g-a+U)$ we obtain from (\ref{eq:gUgU}) that
$$
V^n(x) = u (2^n g - a + U)
$$
which is periodic in $n$.}
\end{proof}

% any $g \in G$. Assume that for the subgroup $U$ and for $g$ the 
%mapping $n \mapsto 2^ng$ is periodic.
%It is easy to check that if the initial point is of the form $u(g-a+U)$ then
%\begin{equation}\label{trper}
%V^k(u(g-a+U))=u(2^kg-a+U), \  \ 0\leq k\leq n-1
%\end{equation}
%Since mapping $n \mapsto 2^ng$ is periodic from (\ref{trper}) we obtain
%that $x^{(k)}=u(2^kg-a+U), \ \ k=0,...,n-1$ is the periodic trajectory of QSO (\ref{Gmkso}).

%Now we study {\em regularity} properties of $(G,\mu)-$QSO.
One can ask under which conditions on $G$ and ${s(\mu)}$ the QSO (\ref{Gmkso}) is regular.

\begin{pro}
\label{regular}
{A} $(G,\mu)-$QSO is regular iff for each ${s(\mu)}$-invariant subgroup $U$ and every $g \in G$ there exists some $n \in {\mathbb{N}}$ such that $2^ng \in U$.
In this case, all trajectories converge to the uniform distribution on some coset of some ${s(\mu)}$-invariant subgroup $U$.
\end{pro}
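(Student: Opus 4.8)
The plan is to reduce the analytic question of regularity entirely to the combinatorial dynamics of the support cosets, using Theorem \ref{convergence} as the bridge: by that theorem $V^nx$ converges if and only if $u(s(V^nx))$ does, so everything hinges on how the support evolves.

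First I would fix $x \in S^G$ and let $U$ and $n_0$ be as in Remark \ref{remark}, so that for $n \ge n_0$ the support $s(V^nx)$ is a coset of the fixed $s(\mu)$-invariant subgroup $U$. The uniform distributions on the finitely many cosets of $U$ are distinct points of $S^G$, so the sequence $u(s(V^nx))$ converges if and only if it is eventually constant, i.e.\ if and only if the coset $s(V^nx)$ is eventually constant. Since Theorem \ref{convergence} gives $V^nx - u(s(V^nx)) \to 0$, it follows that $V^nx$ converges if and only if the coset sequence $\{s(V^nx)\}_{n \ge n_0}$ is eventually constant.

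Next I would linearize the coset recursion. Writing $s(V^nx)=c_n+U$ for $n \ge n_0$ and fixing some $a \in s(\mu)$, the $s(\mu)$-invariance of $U$ forces $s(\mu) \subseteq a+U$ and hence $U+s(\mu)=a+U$; substituting this into (\ref{eq:gUgU}) gives $c_{n+1}+U=(2c_n+a)+U$. The shift $d_n:=c_n+a$ then conjugates this affine recursion to pure doubling, $d_{n+1}\equiv 2d_n \pmod U$, so that in the finite quotient $G/U$ the images satisfy $\bar d_{n_0+j}=2^j\bar d_{n_0}$. As the only fixed point of $\bar z \mapsto 2\bar z$ on $G/U$ is $\bar 0$, this orbit is eventually constant if and only if it reaches $\bar 0$, i.e.\ if and only if $2^j d_{n_0}\in U$ for some $j$. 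Combined with the previous step, $V^nx$ converges if and only if $2^j d_{n_0}\in U$ for some $j$.

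It then remains to quantify over all $x$ in both directions. For ``if'', the stated condition applied to the pair $(U,d_{n_0})$ arising from an arbitrary $x$ forces convergence, so $V$ is regular. For ``only if'', I would realize every relevant pair $(U,g)$ by an initial datum: given an $s(\mu)$-invariant $U$, $g\in G$ and $a \in s(\mu)$, take $x=u(g-a+U)$; its support is the coset $g-a+U$, so by the proposition characterizing equality $|s(Vx)|=|s(x)|$ one has $n_0=0$ with stabilizing subgroup $U$ and $d_0=g$, and regularity forces $2^ng\in U$ for some $n$, which is exactly the asserted criterion. The closing sentence is then immediate, since once $V$ is regular, $V^nx$ stabilizes at the uniform distribution on the limiting coset of $U$. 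The main obstacle is precisely the correct identification of this coset dynamics --- pinning down $U+s(\mu)=a+U$ to produce the affine map $c_{n+1}=2c_n+a$ and recognizing that $d_n=c_n+a$ reduces it to doubling on $G/U$, whose eventual constancy is governed solely by whether the orbit hits $\bar 0$; the remaining quantifier bookkeeping, including the verification that $x=u(g-a+U)$ realizes an arbitrary $(U,g)$ with $n_0=0$, is routine.
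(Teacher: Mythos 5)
Your proof is correct and follows essentially the same route as the paper: both reduce regularity to the doubling dynamics $n \mapsto 2^n g$ modulo an $s(\mu)$-invariant subgroup $U$, with the initial datum $u(g-a+U)$ realizing an arbitrary pair $(U,g)$ exactly as in Theorem \ref{thm:periodic}. The only difference is presentational: you derive the coset recursion $c_{n+1}+U=(2c_n+a)+U$ and the eventual-constancy criterion explicitly, whereas the paper outsources these steps to Theorems \ref{convergence} and \ref{thm:periodic} together with Remark \ref{remark}.
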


\begin{proof}
{
Theorem \ref{convergence} and Theorem \ref{thm:periodic} show that all periodic (possibly constant) trajectories are of the form $u(2^n g-a+U)$ for some $a \in s(\mu), g \in G$ and $U$ an $s(\mu)$-invariant subgroup of $G$, and that, conversely, all such trajectories are periodic. It remains to investigate which of these trajectories have a minimal period at least 2. 

Suppose we are given a periodic trajectory corresponding to $g, a, U$ as above.
If - on the one hand - all $2^n g, n \in \mathbb{N}$ eventually end up in $U$, then there is no strict periodicity, i.e.\ the trajectory becomes constant.

If - on the other hand - $g, a, U$ are such that $2^n g \notin U$, then 
$$
2^{n+1}g-a+U \neq 2^n g - a+U.
$$
Consequently, if $2^n g \notin U$ holds for all $n \in \mathbb{N}$, then  $V$ is not regular.
The last statement follows from Theorem \ref{convergence}.
}
\end{proof}

{Finally}, we state some more explicit conditions for regularity or non-regularity.

\begin{cor}
If $m{=|G|}=2^k$ for some $k \in  {\mathbb{N}_0}$, then {the} $(G,\mu)-$QSO is regular for any choice of $\mu$.
\end{cor}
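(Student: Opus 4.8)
The plan is to reduce the corollary directly to the characterization of regularity established in Proposition \ref{regular}. That proposition asserts that a $(G,\mu)$-QSO is regular precisely when, for every $s(\mu)$-invariant subgroup $U$ and every $g \in G$, some doubling iterate $2^n g$ with $n \in \mathbb{N}$ lands in $U$. Thus it suffices to verify this doubling condition under the hypothesis $|G| = 2^k$, and in fact I expect to be able to do so in a way that is completely uniform in both $g$ and $U$.

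First I would observe that $G$ is a finite Abelian $2$-group. By Lagrange's theorem the order of any element $g \in G$ divides $|G| = 2^k$, and is therefore itself a power of $2$, say $\mathrm{ord}(g) = 2^j$ with $0 \le j \le k$. In particular $2^k g = 0$ for every $g \in G$, since $2^k$ is a multiple of $\mathrm{ord}(g)$. The key point is then that the target subgroup $U$ plays no role whatsoever: because $0 \in U$ for every subgroup $U$, the single choice $n = k$ already gives $2^n g = 2^k g = 0 \in U$ simultaneously for all $g \in G$ and all $s(\mu)$-invariant subgroups $U$. Hence the condition of Proposition \ref{regular} holds unconditionally, and the QSO is regular for every choice of $\mu$.

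I do not anticipate a genuine obstacle here; the result is an immediate consequence of Proposition \ref{regular} together with the elementary fact that a finite group is annihilated by multiplication by its order. The only points requiring a moment's care are the trivial edge case $k = 0$ (where $G$ is the one-element group and there is nothing to prove) and the bookkeeping of checking that the chosen exponent lies in $\mathbb{N}$: for $k \ge 1$ we may simply take $n = k \in \mathbb{N}$, and even if one insists on a strictly positive exponent, the relation $2^k g = 0$ with $k \ge 1$ supplies it. Since the doubling condition is met uniformly, the concluding assertion of Proposition \ref{regular} then also identifies the limit as the uniform distribution on a coset of an $s(\mu)$-invariant subgroup.
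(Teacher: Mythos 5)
Your proposal is correct and follows essentially the same route as the paper: the paper's proof likewise observes that in a group of order $2^k$ every element has order $2^{\nu}$ for some $\nu \le k$ and then invokes Proposition \ref{regular}. Your version merely makes explicit the uniform choice $n=k$ (so that $2^k g = 0 \in U$ for all $g$ and $U$) and the trivial case $k=0$, which the paper leaves implicit.
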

\begin{proof}
In this case, the order of any $g \in G$ is of the form $2^{\nu}$ for some $\nu  {\le k}$, so the claim follows from Proposition \ref{regular}.
\end{proof}

\begin{cor}
If $m{=|G|}$ is not a power of 2, then there exists some $\mu \in {S^G}$ for which {the} $(G,\mu)-$QSO is not regular, e.g. $\mu \in {S^G}$ defined by $\mu_0=1$.
\end{cor}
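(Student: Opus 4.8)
The plan is to verify non-regularity through the criterion of Proposition \ref{regular}: the $(G,\mu)$-QSO fails to be regular as soon as there is a single $s(\mu)$-invariant subgroup $U$ together with an element $g \in G$ such that $2^n g \notin U$ for every $n \in \mathbb{N}$. So the whole task reduces to exhibiting one such pair $(U,g)$ for a suitable $\mu$.

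Following the hint, I would take $\mu$ to be the point mass at the identity, i.e.\ $\mu_0 = 1$, so that $s(\mu) = \{0\}$. The key simplification is that with this choice \emph{every} subgroup $U$ of $G$ is $s(\mu)$-invariant, since $U + \{0\} = U$ gives $|U + s(\mu)| = |U|$ trivially. In particular the trivial subgroup $U = \{0\}$ is $s(\mu)$-invariant, and for it the regularity criterion of Proposition \ref{regular} demands that for every $g \in G$ some iterate $2^n g$ equal the identity $0$; equivalently, that the order of every element of $G$ be a power of $2$.

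Now I would invoke the hypothesis that $m = |G|$ is not a power of $2$. Then $m$ has an odd prime divisor $p$, and by Cauchy's theorem $G$ contains an element $g$ of order $p$. For this $g$, the equality $2^n g = 0$ would force $p \mid 2^n$, which is impossible because $p$ is odd; hence $2^n g \neq 0$ for all $n \in \mathbb{N}$. Thus the pair $(U,g) = (\{0\},g)$ violates the criterion, and the $(G,\mu)$-QSO is not regular.

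There is no real obstacle here; the argument is a direct application of Proposition \ref{regular}. The only two points that deserve a line of justification are first that $s(\mu) = \{0\}$ makes the trivial subgroup $s(\mu)$-invariant (immediate), and second the existence of an element of odd prime order, which is exactly Cauchy's theorem for the odd prime $p \mid m$. If one prefers to avoid citing the characterization, the same $g$ and $U$ produce, via Theorem \ref{thm:periodic} with $a = 0$, the purely periodic sequence $n \mapsto u(\{2^n g\})$ (the map $x \mapsto 2x$ being a bijection of the cyclic group $\langle g\rangle$ of odd order $p$), which is non-constant since $2g \neq g$, thereby directly exhibiting a trajectory that does not converge.
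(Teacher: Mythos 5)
Your proposal is correct and follows essentially the same route as the paper: both pick $\mu_0=1$, extract an element $g$ of odd prime order $p$ (you via Cauchy's theorem, the paper via the structure theorem for finite Abelian groups), and observe that $2^n g\neq 0$ for all $n$, so that the doubling orbit never stabilizes. Your closing remark exhibiting the non-constant periodic trajectory $n\mapsto u(\{2^n g\})$ is in fact exactly the paper's own argument (the paper takes the point mass $\delta_g$ as initial condition), so the two proofs coincide in substance.
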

\begin{proof}
Since $m$ is not a power of 2 there exists some prime number $p\neq 2$ and some $g \in G$ of order $p$ (this follows from the fact that
every finite Abelian group is the sum of cyclic groups whose orders are powers of primes). Then $2^ng$ is different from 0 for any $n \in {\mathbb{N}_0}$.
Therefore, $x \in {S^G}$ defined as $x_h=\delta_{gh}$ is the initial point of a trajectory of $V$ of minimal period $p$. In particular, $V$ is not
regular.
\end{proof}

\section*{ Acknowledgements}
The {second a}uthor (U.J.) thanks {the} IMU Berlin Einstein Foundation Program (EFP) {and} Berlin Mathematical School (BMS) for {a} scholarship and for support{ing} his visit to Technische Universi{t\"at} (TU) Berlin and TU Berlin for kind hospitality.\\

\end{document}